\numberwithin{equation}{section}
\title[Traveling waves for QHD with nonlinear viscosity]{Traveling waves for  quantum hydrodynamics with nonlinear viscosity}
\author{Corrado Lattanzio and Delyan Zhelyazov}
\address[Corrado Lattanzio]{DISIM, Department of Information Engineering, Computer Science and Mathematics \\ University of L'Aquila, Italy}
\email{corrado@univaq.it}
\address[Delyan Zhelyazov]{DISIM, Department of Information Engineering, Computer Science and Mathematics \\ University of L'Aquila, Italy}\email{delyanatanasov.zhelyazov@univaq.it}
\newtheorem{lemma}{Lemma}
\newtheorem{corollary}[lemma]{Corollary}
\theoremstyle{remark}
\newtheorem{remark}[lemma]{Remark}
\begin{document}

\begin{abstract}
In this paper we study existence of traveling waves  for   1-D compressible Euler system  with dispersion (which models quantum effects  through the Bohm potential) and nonlinear viscosity in the context of quantum hydrodynamic models for superfluidity. The existence of 
profiles is proved for appropriate (super-- or sub-- sonic)  end states defining Lax shocks  for the underlying Euler system formulated in terms of density and velocity without restrictions for the viscosity and dispersion parameters. On the other hand, 
the interplay of the dispersion  and the  viscosity plays a crucial role in proving 
the existence of  oscillatory profiles, showing in this way  how the dispersion plays a significant role in certain regimes.
Numerical experiments are also provided to analyze the sensitivity of such profiles with respect to the viscosity/dispersion terms and with respect to the 
nearness to vacuum.
\end{abstract}

\keywords{quantum hydrodynamics, traveling waves, dispersive--diffusive shock waves}
\subjclass[2010]{76Y05, 35Q35}

\maketitle

\section{Introduction}\label{sec:intr}
The aim of this paper is the study of traveling waves (referred to as \emph{dispersive shocks}) for the following one dimensional
quantum hydrodynamics (QHD) system with nonlinear viscosity:
\begin{equation}
\label{eq_sys_n_intro}
\begin {cases}
\displaystyle{\rho_t+m_x=0,} & \\
\displaystyle{m_t+\left(\frac{m^2}{\rho}+p(\rho) \right)_x=\epsilon \mu \rho \left(\frac{m_{x}}{\rho}\right)_x+\epsilon^2 k^2 \rho \left(\frac{(\sqrt{\rho})_{xx}}{\sqrt{\rho}}\right)_x,} & \\
\end{cases}
\end{equation}
where $\rho\geq0$ is the density, $m=\rho u$ is the momentum, where $u$ stands fo the velocity, and $p(\rho)=\rho^\gamma$ for $\gamma \geq 1$ is the pressure. The positive constants $0<\epsilon\ll1$, $\mu$, and $k>0$ defines the viscosity ($\epsilon \mu$) and the dispersive ($\epsilon^2 k^2$) coefficients.
The shape of the dispersive term is known as the Bohm potential, while, for the specific form of the nonlinear viscosity, we refer to the theory of superfluidity (for instance, see \cite[page 109]{Khalatnikov}), and in particular it describes the interaction of a super fluid and a normal fluid. Moreover, it can be also interpreted as a viscosity term  describing the  interaction of the fluid with a background.

The formulation of models involving dispersive temrs dates back to the 60s \cite{Sagdeev,Zak} and 70s \cite{Gurevich}; for more recent studies, see also \cite{Gurevich1,Nov,Hoefer}.
%
%
Moreover, 
the mathematical treatment of such systems, under different perspectives, and using different techniques,  can be found in \cite{AM1,AM2,AMtf,AMDCDS,AS,Michele1,Michele,DM,DM1,DFM,GLT,BGL-V19}.

Concerning in particular the study of dispersive shocks,
which is the topic of the present work, we refer to  \cite{Humpherys}, where the spectral theory of the linearized operator around the profiles has been discussed in the case of  $p$-system with real viscosity and linear capillarity. Moreover, the case of the Euler formulation \eqref{eq_sys_n_intro}, but with linear viscosity, is investigated in \cite{Zhelyazov}. In this last paper, the Authors in particular proved existence of traveling waves for arbitrary strong shocks of the underlying Euler system, under appropriate  conditions on the ratio $\mu/k$. 
For the nonlinear viscosity case inder examination here, 
since all along the profile we are interested in the density $\rho$ will stay away from zero,  the fluid velocity $u$ is  well defined and, dividing the second equation by $\rho$, we can rewrite \eqref{eq_sys_n_intro} using $(\rho, u)$ variables (see \eqref{eq_sys2_1}-\eqref{eq_sys2_2} below), to obtain a system in conservation form, as already done in \cite{Gasser}.
Then, using this formulation, we are able to prove existence of profiles without the aforementioned restrictions on the viscosity and dispersion coefficients.  However, as already noticed in \cite{Zhelyazov}, the smallness of the ratio  $\mu/k$ determines the oscillatory behavior of the profiles, as we checked also numerically.

The remaining part of the paper is organized as follows. In Section \ref{QHD_nonlinear_dissipation} we introduce the appropriate rewriting of the system in terms of density and velocity, and then we derive the equation solved by a profile. In Section \ref{sec:existence} we present the existence result for the profile, also introducing sufficient conditions  in terms of admissibility of end states. Finally,
 Section \ref{sec:sens} contains the numerical experiments describing  the sensitivity of the profiles, as we vary the viscosity/dispersion ratio, and as the density end state approaches vacuum.
\section{Preliminaries}
\label{QHD_nonlinear_dissipation}
In order to study traveling waves for the QHD model \eqref{eq_sys_n_intro}, in the following we shall first rewrite it in a conservative way using the variables $(\rho, u)$ to then obtain a suitable formulation to efficiently study the dynamical system governing the profile defining the traveling wave. 
This formulation is justified in our analysis also in view of the fact that the profile in the density stays away from vacuum and therefore the fluid velocity $u$ is clearly well defined. In addition to that, is it worth to recall here  the connection of the QHD system \eqref{eq_sys_n_intro} with the Schr\"odinger equation for the corresponding wavefunction, which is defined starting from the hydrodynamic variables $\rho$ and $u$ 
 after one introduce the phase $\phi$ as $u=\phi_x$; among others, see \cite{Gasser}. Hence, we define the enthalpy $h(\rho)$ by $h(\rho)=\ln\rho$ for $\gamma = 1$, and $h(\rho)=\frac{\gamma}{\gamma-1}\rho^{\gamma-1}$ for $\gamma>1$ to obtain
\begin{equation*}
h(\rho)_x=\frac{1}{\rho}(p(\rho))_x.\nonumber
\end{equation*}
After dividing the second equation of \eqref{eq_sys_n_intro} by $\rho$, we obtain
\begin{align}
&\rho_t+(\rho u)_x=0, \label{eq_sys1_1}\\
&\frac{(\rho u)_t}{\rho}+\frac{1}{\rho}\Big(\rho u^2 + p(\rho) \Big)_x=\epsilon \mu \Big(\frac{(\rho u)_{x}}{\rho}\Big)_x+\epsilon^2 k^2 \Big(\frac{(\sqrt{\rho})_{xx}}{\sqrt{\rho}}\Big{)}_x. \label{eq_sys1_2}
\end{align}
Then, we simplify the momentum equation by using the continuity equation
\begin{align*}
&\frac{(\rho u)_t}{\rho}+\frac{1}{\rho}(\rho u^2)_x=\frac{\rho_t u}{\rho}+u_t + \frac{1}{\rho}\rho_x u^2 + (u^2)_x\nonumber\\
&=-\frac{(\rho u)_x u}{\rho}+u_t + \frac{1}{\rho}\rho_x u^2 + (u^2)_x=u_t + \frac{(u^2)_x}{2}
\end{align*}
and write the system \eqref{eq_sys1_1}-\eqref{eq_sys1_2} in conservative form using the velocity and the enthalpy as follows:
\begin{align}
&\rho_t+(\rho u)_x=0,\label{eq_sys2_1}\\
&u_t + \frac{(u^2)_x}{2}+(h(\rho))_x=\epsilon \mu \Big(\frac{(\rho u)_{x}}{\rho}\Big)_x+\epsilon^2 k^2 \Big(\frac{(\sqrt{\rho})_{xx}}{\sqrt{\rho}}\Big{)}_x. \label{eq_sys2_2}
\end{align}
As said before, we are interested in  traveling wave profiles for \eqref{eq_sys2_1}-\eqref{eq_sys2_2}, namely solution of this system of the form
\begin{equation}\label{eq:prof}
\rho(t,x)=P\Big(\frac{x-s t}{\epsilon}\Big)^2,\mbox{ }u(t,x)=U\Big(\frac{x-st}{\epsilon}\Big),
\end{equation}
where $s \in \mathbb{R}$ is the speed of the traveling wave and the limiting  states
\begin{equation*}
P^{\pm}=\lim_{y\rightarrow \pm \infty}P(y),\mbox{ }u^{\pm}=\lim_{y\rightarrow \pm \infty}U(y),
\end{equation*}
for $\rho^\pm = (P^2)^\pm$, are assumed to satisfy the Rankine--Hugoniot conditions
for the underlying system
\begin{align}
&\rho_t+(\rho u)_x=0, \label{euler1} \\
&u_t + \frac{(u^2)_x}{2}+(h(\rho))_x=0,
\label{euler2}
\end{align}
namely
\begin{align}
s((P^2)^+-(P^2)^-) &=(P^2 u)^+ - (P^2 u)^-,\label{RHnon1}\\
s(u^+-u^-)&=\Big(\frac{u^2}{2}+h(P^2)\Big)^+-\Big(\frac{u^2}{2}+h(P^2)\Big)^-.
\label{RHnon2}
\end{align}
Plugging the ansatz \eqref{eq:prof} in  \eqref{eq_sys2_1}-\eqref{eq_sys2_2} we obtain
\begin{align}
&-s(P^2)'+(P^2 U)'=0,\label{profile1_1}\\
&-s U'+\frac{(U^2)'}{2}+(h(P^2))'=\mu \Big(\frac{(P^2 U)'}{P^2}\Big)'+k^2\Big(\frac{P''}{P}\Big)',\label{profile1_2}
\end{align}
where $P=P(y)$, $U=U(y)$ and $'$ denotes $d/dy$. To get a simpler system we substitute $(P^2 U)'$ from equation \eqref{profile1_1} in the first term of the right-hand side of \eqref{profile1_2} and the dynamical systems rewrites as follows
\begin{align}
&-s(P^2)'+(P^2 U)'=0,\label{profile2_1}\\
&-s U'+\frac{(U^2)'}{2}+(h(P^2))'=\mu \Big(\frac{s (P^2)'}{P^2}\Big)'+k^2\Big(\frac{P''}{P}\Big)'.
\label{profile2_2}
\end{align}
After integration of equation \eqref{profile2_1}  up to $\pm\infty$ we end up with
\begin{equation}
U=s-\frac{A}{P^2} \label{eq_v},
\end{equation}
where
\begin{equation}\label{eq:Aadd} 
A=(s-u^{\pm})(P^{\pm})^2.
\end{equation}
Similarly,  integration of equation \eqref{profile2_2} yields to
\begin{equation}
-s U +\frac{U^2}{2}+h(P^2)=2 s \mu \frac{P'}{P}+k^2\frac{P''}{P}-B \label{eq2D0},
\end{equation}
where
\begin{equation}\label{eq:Badd} 
B=s u^{\pm}-\frac{(u^{\pm})^2}{2}-h((P^{\pm})^2).
\end{equation}
In view of  \eqref{eq_v} we can eliminate the variable $U$ in \eqref{eq2D0} to obtain the second order equation
\begin{equation}
P''=\frac{f(P)}{k^2}-\frac{2 s \mu}{k^2} P',\label{eq_second_order}
\end{equation}
where
\begin{equation*}
f(P)=\Big(\frac{1}{2}\Big(\frac{A^2}{P^4}-s^2\Big)+h(P^2)+B\Big)P.
\end{equation*}
Finally, in view of \eqref{RHnon1}-\eqref{RHnon2}, the constants $A$ and $B$ in $f(P)$ can be expressed only n terms of $P^{\pm}$:
\begin{align*}
&f(P)\\
&=\Big(\frac{(P^+P^-)^4}{P^4}\frac{h((P^+)^2)-h((P^-)^2)}{(P^+)^4-(P^-)^4}+h(P^2)-\frac{(P^+)^4 h((P^+)^2)-(P^-)^4 h((P^-)^2)}{(P^+)^4-(P^-)^4}\Big)P.
\end{align*}
\section{Global existence of profiles}\label{sec:existence}
The second order equation \eqref{eq_second_order} is the starting point in the path of the   
 global existence result of traveling wave profiles for QHD with nonlinear viscosity  \eqref{eq_sys1_1}-\eqref{eq_sys1_2}. 
 For the latter, in contrast to the linear viscosity case investigated in \cite{Zhelyazov}, here  we do not have to impose conditions on the viscosity and dispersion coefficients in order to prove existence of profiles. However, as expected (see also the discussion in \cite{Gasser}),  the effect of the dispersion term is clearly visible in terms of the presence of oscillatory profiles in certain regimes; see the condition of Lemma \ref{global_existence_nonlinear_dissipation}  and the discussions of Section \ref{subsec:sensvisc} below.
 
To start with, we introduce the variable $P'=Q$ and rewrite \eqref{eq_second_order} as a first--order system as follows
\begin{align}
P'&=Q=f_1,\label{first_order_system_1}\\
Q'&=\frac{f(P)}{k^2}-\frac{2 s \mu}{k^2}Q=f_2,\label{first_order_system_2}
\end{align}
which has equilibria $(P^{\pm},0)$, being the end states $P^{\pm}$ positive roots of $f(P)=0$, as it is manifest by using relation  \eqref{eq:Aadd} and  \eqref{eq:Badd} 
in the expression defining $f$. 
Moreover, a direct calculation shows  
\begin{equation} \label{eq:fprime}
f'(P) = \begin{dcases}
-\frac{3 A^2 }{2 P^4}+\ln(P^2)+2+B-\frac{s^2}{2}, & \gamma = 1\\
- \frac{3 A^2 }{2 P^4}+\frac{\gamma(2 \gamma-1)}{\gamma-1} P^{2(\gamma-1)}+B-\frac{s^2}{2}, & \gamma >1;
\end{dcases}
\end{equation}
\begin{equation*}
f''(P) = \begin{dcases}
\frac{6A^2}{P^5} + \frac{2}{P}, & \gamma = 1\\
\frac{6A^2}{P^5} + 2\gamma(2\gamma-1)P^{2\gamma-3}, & \gamma >1.
\end{dcases}
\end{equation*}
In particular, for $P>0$ we have $f''(P)>0$ and therefore $P^\pm$ are the only two positive zeroes of $f$.

The proof of existence of an heteroclinic orbit  for   \eqref{first_order_system_1}-\eqref{first_order_system_2} between these equilibria
is obtained separately in the two cases $s>0$ and $s<0$, and under appropriate conditions for the end states $P^\pm$. 
The latter will be then interpreted afterwards  in terms of super-- and sub--sonic property for the corresponding end states $(\rho^\pm, u^\pm,s)$ defining a Lax shock 
for the $\epsilon =0$ reduced system \eqref{euler1}-\eqref{euler2}. 
The result will be obtained by showing the existence of  a Lyapunov function for that system  and then via an   application of  the LaSalle invariance principle.
For this, a crucial role will be played by the following  reduced system
\begin{align}
P'&=Q,\label{first_order_system_reduced_1}\\
Q'&=\frac{f(P)}{k^2}.\label{first_order_system_reduced_2}
\end{align}
It has (conserved) energy 
\begin{equation*}
H(P,Q)=F(P)-\frac{Q^2}{2}-F(P^-),
\end{equation*}
where 
\begin{align*}
F(P): = & \frac{1}{k^2}\int^P f(z) dz \\
= & 
\begin{dcases}
\frac{1}{k^2}\Big(-\frac{A^2}{4 P^2}+\frac{1}{2}\Big(B-\frac{s^2}{2}-1\Big)P^2+\frac{1}{2}P^2 \ln(P^2)\Big), & \gamma = 1\\
\frac{1}{4 k^2}\Big( -\frac{A^2}{P^2}+(2 B - s^2) P^2 +\frac{2 }{\gamma - 1}P^{2 \gamma} \Big), & \gamma >1.
\end{dcases}
\end{align*} 
In particular, we will show  that there exists a homoclinic loop for \eqref{first_order_system_reduced_1}-\eqref{first_order_system_reduced_2}, which confines the heteroclinic orbit we are looking for; see Figure \ref{fig_heteroclinic2}.
%
%
%
\begin{lemma}
\label{global_existence_nonlinear_dissipation}
Suppose that the end states $P^{\pm}$, $U^{\pm}$ and the speed $s$ satisfy the Rankine-Hugoniot conditions \eqref{RHnon1}-\eqref{RHnon2}.
\begin{enumerate}
\item[\textit{(i)}]
If $s>0$ and $0<P^+<P^-$, then there exists a heteroclinic, connecting $[P^-,0]$ to $[P^+,0]$. If in addition
\begin{equation*}
\frac{s \mu}{k}<\sqrt{-f'(P^+)},
\end{equation*}
then the heteroclinic is non-monotone.
\item[\textit{(ii)}]
If $s<0$ and $0<P^-<P^+$, then there exists a heteroclinic, connecting $[P^-,0]$ to $[P^+,0]$. If in addition
\begin{equation*}
-\frac{s \mu}{k}<\sqrt{-f'(P^-)},
\end{equation*}
then the heteroclinic is non-monotone.
\end{enumerate}
\end{lemma}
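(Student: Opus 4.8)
The plan is to treat the system \eqref{first_order_system_1}-\eqref{first_order_system_2} as a perturbation of the Hamiltonian system \eqref{first_order_system_reduced_1}-\eqref{first_order_system_reduced_2}, using the energy $H(P,Q)$ as a Lyapunov function and invoking the LaSalle invariance principle. I will carry out case \textit{(i)} ($s>0$, $0<P^+<P^-$) in detail; case \textit{(ii)} is symmetric under reflection. First I would record the phase-plane picture of the reduced system: since $f''>0$ on $P>0$ and $f$ has exactly the two positive zeros $P^\pm$, the potential $F$ has a local max at one equilibrium and a local min at the other; checking the sign of $f'$ at $P^\pm$ (using \eqref{eq:fprime} together with the Rankine--Hugoniot relations for $A,B$) shows that $[P^-,0]$ is a saddle and $[P^+,0]$ is a center for \eqref{first_order_system_reduced_1}-\eqref{first_order_system_reduced_2} when $0<P^+<P^-$. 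Hence the level set $\{H=0\}$ contains a homoclinic loop based at $[P^-,0]$ that encircles the center $[P^+,0]$; this loop is the confining curve in Figure \ref{fig_heteroclinic2}.

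Next I would compute the derivative of $H$ along trajectories of the full system \eqref{first_order_system_1}-\eqref{first_order_system_2}:
\begin{equation*}
\frac{d}{dy}H(P(y),Q(y)) = \frac{f(P)}{k^2}P' - Q Q' = \frac{f(P)}{k^2}Q - Q\Big(\frac{f(P)}{k^2}-\frac{2s\mu}{k^2}Q\Big) = \frac{2s\mu}{k^2}Q^2 \geq 0,
\end{equation*}
using $s>0$, $\mu>0$. So $H$ is nondecreasing along orbits; equivalently, $-H$ is a Lyapunov function, and the full flow pushes trajectories from lower to higher values of $H$. Since $H([P^-,0])=0$ and $H([P^+,0])=F(P^+)-F(P^-)>0$ (the center sits strictly inside the homoclinic loop, where $H>0$), the unstable manifold of the saddle $[P^-,0]$ — which leaves along the branch that, for $\mu=0$, would be the homoclinic loop — is forced into the interior region $\mathcal{R}$ bounded by the loop, on which $0\le H\le H([P^+,0])$. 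I would check that $\mathcal{R}$ is positively invariant: on the loop itself $H=0$ is the minimum of $H$ on $\overline{\mathcal{R}}$, and since $\dot H\ge 0$ no trajectory can cross outward through the loop (a trajectory on the loop would have to decrease $H$ to leave). Boundedness of the forward orbit then follows, and away from vacuum $P$ stays bounded below since $F(P)\to+\infty$ as $P\to 0^+$ forces $P$ off $0$ on the bounded energy region.

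Then LaSalle applies: the $\omega$-limit set of the unstable-manifold trajectory is a nonempty, compact, invariant subset of $\{\dot H=0\}=\{Q=0\}$. On $\{Q=0\}$ the system reduces to $P'=0$, $Q'=f(P)/k^2$, so the largest invariant subset is the set of equilibria $\{[P^-,0],[P^+,0]\}\cap\overline{\mathcal{R}}$. The orbit cannot return to the saddle $[P^-,0]$ (it would form a homoclinic loop for the full system, impossible since $\dot H\ge 0$ and $\not\equiv 0$ off equilibria), so $\omega$-limit $=\{[P^+,0]\}$, giving the heteroclinic connecting $[P^-,0]$ to $[P^+,0]$. Finally, for the non-monotonicity statement: $[P^+,0]$ is asymptotically stable for the full system, and its linearization has matrix $\begin{pmatrix}0 & 1\\ f'(P^+)/k^2 & -2s\mu/k^2\end{pmatrix}$ with $f'(P^+)<0$; the eigenvalues are complex (a spiral) precisely when the discriminant $(2s\mu/k^2)^2 + 4f'(P^+)/k^2 < 0$, i.e. $s\mu/k < \sqrt{-f'(P^+)}$, forcing the profile to oscillate as $y\to+\infty$ and hence be non-monotone. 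The main obstacle I anticipate is the bookkeeping to confirm the saddle/center classification and the sign $f'(P^+)<0$ from \eqref{eq:fprime} after substituting the Rankine--Hugoniot expressions for $A$ and $B$ — i.e. translating "$0<P^+<P^-$" into the correct local structure at each equilibrium — and checking carefully that the unstable manifold exits on the side interior to the homoclinic loop rather than escaping to $P\to\infty$.
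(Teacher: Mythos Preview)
Your approach is essentially the paper's: use $H$ (equivalently $-H$) as a Lyapunov function, trap the unstable manifold of $[P^-,0]$ inside the homoclinic loop of the reduced Hamiltonian system, and apply LaSalle to force convergence to $[P^+,0]$. The obstacle you flag at the end --- that the unstable manifold actually exits on the interior side of the loop rather than toward $P\to\infty$ --- is exactly what the paper settles by comparing the slopes of the unstable eigenvectors of $J$ and $\tilde J$ at $[P^-,0]$; your exclusion of a return to $[P^-,0]$ via ``$\dot H\ge 0$ forces $H\equiv 0$ hence $Q\equiv 0$'' is in fact a bit cleaner than the paper's second eigenvector comparison, and note the small slip that $F(P)\to -\infty$ (not $+\infty$) as $P\to 0^+$, which is harmless since the loop already gives $P\ge P^*>0$.
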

\begin{proof} 
\emph{Case (i).}
First of all, let us observe that $A\neq 0$. Indeed, if not, being $P^\pm \neq 0$, from \eqref{eq:Aadd} we readily obtain $u^+ = s = u^-$. Then, \eqref{RHnon2}  and the strict monotonicity of $h$ on $\rho >0$ imply $\rho^+ = (P^2)^+ = (P^2)^- = \rho^-$; a contradiction.

Now, from \eqref{eq:fprime} we know that $f'(P) \to -\infty$ as $P\to 0^+$ and  
$f'(P) \to +\infty$ as $P\to +\infty$. Since as already noticed  
 $f''(P)>0$, then  $f'(P)$ is monotonically increasing and it has an  unique zero $P_0$, which in additions verifies $P^+<P_0<P^-$.
%
 In the interval $P^+\leq P<P^-$ we have 
 \begin{equation*}
 F(P)-F(P^-)= \frac{1}{k^2} \int_{P^-}^P f(z) dz>0.
 \end{equation*}
 Moreover $F'(P)=f(P)>0$ for $0<P<P^+$ and $F(P) \to  - \infty$ as $P \to 0^+$.  Hence,  there is a point $P^*\in (0,P^+)$ such that $F(P^*)-F(P^-)=0$. Therefore,   $H(P^*,0)=0$,
 $H(P,0)<0$ for $0<P<P^*$ and $H(P,0)>0$ for $P^*<P<P^+$.
 Then,  reduced system   \eqref{first_order_system_reduced_1}-\eqref{first_order_system_reduced_2} has an  homoclinic loop, starting at $[P^-,0]$ and passing through the point $[P^*,0]$, which is contained in the  level set $H(P,Q)=0$.  If we express $Q$ as a function of $P$ from $H(P,Q)=0$, the homoclinic loop can be expressed by the two branches
\begin{equation*}
Q=\pm \sqrt{2(F(P)-F(P^-))},
\end{equation*}
for $P^* \leq P \leq P^-$; see Figure \ref{fig_heteroclinic2}.

We want now to prove that the homoclinic loop of \eqref{first_order_system_reduced_1}-\eqref{first_order_system_reduced_2} defines a confining set for 
 \eqref{first_order_system_1}-\eqref{first_order_system_2}. Indeed, first of all we see that $H(P,0)= F(P)-F(P^-) > 0$ for any $P\in (P^*,P^-)$ and therefore $H(P,Q) >0$ in the interior of the homoclinic loop. Moreover, 
let us  consider  a trajectory $[P(y),Q(y)]$  solution of \eqref{first_order_system_1}-\eqref{first_order_system_2} and let us define $\mathcal{H}(y):=H(P(y),Q(y))$. We have
\begin{equation*}
\mathcal{H}' = \frac{\partial H}{\partial P}P'+\frac{\partial H}{\partial Q}Q'=\frac{2 s \mu}{k^2}Q^2 \geq 0.
\end{equation*}
Since $\mathcal{H}'\geq 0$ for all points of the homoclinic loop, we conclude that  a trajectory starting inside it at $y=0$ will stay inside for all $y\geq 0$.
\begin{figure}
\begin{center}
\includegraphics[scale=0.8]{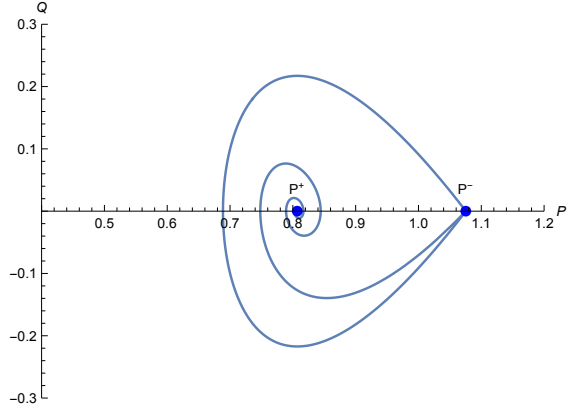}
\end{center}
\caption{The homoclimic loop and the heteroclinic connection for parameters $A=1$, $B=-3.1$, $s=1$, $\gamma=3/2$, $\mu=0.3$, $k=1$}
\label{fig_heteroclinic2}
\end{figure}

Now we are going to show, that the eigenvector, tangent to the unstable subspace of the steady-state $[P^-,0]$ is pointing inside the homoclinic loop. The linearization of \eqref{first_order_system_1}-\eqref{first_order_system_2} at $P^{\pm}$ and $Q=0$ is $J$ and the linearization of \eqref{first_order_system_reduced_1}-\eqref{first_order_system_reduced_2} at $P^{\pm}$ and $Q=0$ is $\tilde{J}$, where
\begin{equation*}
J=\begin{bmatrix}
0 & 1\\
\frac{f'(P^{\pm})}{k^2} & -\frac{2 s \mu}{k^2}
\end{bmatrix},\mbox{ }
\tilde{J}=\begin{bmatrix}
0 & 1\\
\frac{f'(P^{\pm})}{k^2} & 0
\end{bmatrix}.
\end{equation*}
The eigenvalues of $J$ are
\begin{equation*}
\lambda_{1,2}=\frac{-s \mu \pm \sqrt{k^2 f'(P^\pm)+s^2 \mu^2}}{k^2},
\end{equation*}
while the eigenvalues of $\tilde{J}$ are
\begin{equation*}
\tilde{\lambda}_{1,2}=\pm\frac{1}{k}\sqrt{f'(P^\pm)}.
\end{equation*}
At  the steady-state $[P^-,0]$, since $f'(P^-)>0$, we have $\lambda_1>0$ and $\lambda_2<0$ and therefore  $[P^-,0]$ is a saddle for  \eqref{first_order_system_1}-\eqref{first_order_system_2}. The eigenvector of $J$ corresponding to $\lambda_1$,  which  is tangent to the unstable manifold of the saddle,
is given by
\begin{equation*}
v_1=-\begin{bmatrix}
\dfrac{s \mu + \sqrt{k^2 f'(P^-) + s^2 \mu^2}}{f'(P^-)} \\ 1
\end{bmatrix}.
\end{equation*}

Now consider the linearization of \eqref{first_order_system_reduced_1}-\eqref{first_order_system_reduced_2} at $[P^-,0]$. For the eigenvalues we have $\tilde{\lambda}_1>0$ and $\tilde{\lambda}_2<0$. The eigenvector of $\tilde{J}$, corresponding to the unstable eigenvalue $\tilde{\lambda}_1$ is
\begin{equation*}
\tilde{v}_1=-\begin{bmatrix}
\dfrac{k}{\sqrt{f'(P^-)}} \\ 1
\end{bmatrix}.
\end{equation*}
If $\tilde{v}_{1,1}>v_{1,1}$, then the eigenvector points inside the homoclinic loop. Since
\begin{equation*}
2 s^2 \mu^2 + 2 s \mu \sqrt{k^2 f'(P^-)+s^2 \mu^2}>0,
\end{equation*}
we get
\begin{equation}
(s \mu + \sqrt{k^2 f'(P^-)+s^2 \mu^2})^2 > k^2 f'(P^-).\label{vector_direction}
\end{equation}
The inequality $\tilde{v}_{1,1}>v_{1,1}$ follows from \eqref{vector_direction} by taking a square root and dividing by $f'(P^-)$.
As a consequence, being the set defined by  the homoclinic loop a confining set   for 
 \eqref{first_order_system_1}-\eqref{first_order_system_2}, we conclude that the orbit exiting from  $[P^-,0]$  tangent to the eigenvector $v_1$ is trapped inside it.

Now, let us examine the linearization 
at the steady-state $[P^+,0]$. Since we have $f'(P^+)<0$, then either 
$$k^2 f(P^+)+s^2 \mu^2<0,$$ 
or 
$$k^2 f(P^+)+s^2 \mu^2 \geq 0\ \hbox{and}\ \sqrt{k^2 f(P^+)+s^2 \mu^2}<s \mu.$$ 
In both cases,  $\Re \lambda_{1}, \Re \lambda_{2}<0$, that is the steady-state $[P^+,0]$ is stable. 

At this point, we need first to exclude that orbits 
inside the homoclinic loop  converge to $[P^-,0]$.
To this end, the eigenvector of $J$ at this point,  corresponding to the stable eigenvalue $\lambda_2$,
is given by
\begin{equation*}
v_2=\begin{bmatrix}
\dfrac{-s \mu + \sqrt{k^2 f'(P^-) + s^2 \mu^2}}{f'(P^-)} \\ -1
\end{bmatrix},
\end{equation*}
while the eigenvector of $\tilde{J}$, again at $[P^-,0]$ and corresponding to $\tilde{\lambda}_2$, is
\begin{equation*}
\tilde{v}_2=\begin{bmatrix}
\dfrac{k}{\sqrt{f'(P^-)}} \\ -1
\end{bmatrix}.
\end{equation*}
If $\tilde{v}_{2,1}>v_{2,1}$, then the eigenvector, tangent to the stable manifold of the saddle is pointing towards $[P^-,0]$ from outside the homoclinic loop. 
Since 
$$(k \sqrt{f'(P^-)}+s \mu)^2>k^2 f'(P^-)+s^2 \mu^2>0,$$
 taking a square root we get 
 $$k \sqrt{f'(P^-)}>-s \mu+\sqrt{k^2 f'(P^-)+s^2 \mu^2}$$
  and dividing by $f'(P^-)$ we obtain $\tilde{v}_{2,1}>v_{2,1}$. As a consequence,  no orbits inside the homoclinic loop can converge to $[P^-,0]$.

Now, let $\Omega$ be the compact set defined by the homoclinic loop and its interior. Clearly, as proved before,  $\Omega$ is an invariant set for 
 \eqref{first_order_system_1}-\eqref{first_order_system_2}. In $\Omega$, consider the function
\begin{equation*}
V(P,Q)=\frac{Q^2}{2}-F(P)+F(P^+)=F(P^+)-F(P^-)-H(P,Q),
\end{equation*}
and denote by $\mathcal{V}(y): =V(P(y),Q(y))$ the dynamics of this function along an orbit of the dynamical system  \eqref{first_order_system_1}-\eqref{first_order_system_2}. 
We have $V(P^+,0)=0$ and $\mathcal{V}'=-\mathcal{H}' \leq 0$ in $\Omega$, that is  $V$ is a Lyapunov function for  \eqref{first_order_system_1}-\eqref{first_order_system_2}.
Denote by $E$  the set of points in $\Omega$ where $\mathcal{V}'=0$, that is $E=\Omega\cap\{Q=0\}$. Hence,  the LaSalle invariance principle implies that any solution $[P(y),Q(y)]$ in $\Omega$  will converge to the largest invariant subset of $E$, which is the set of two steady-states $[P^+,0]$ and $[P^-,0]$. Finally, since as proven before $[P^-,0]$ can not be reached by orbits inside $\Omega$, we can conclude that the trajectory exiting along the unstable manifold of $[P^-,0]$ will converge to the stable  steady-state $[P^+,0]$, proving the existence of the desired heteroclinic connection.
Finally, if $k^2 f(P^+)+s^2 \mu^2<0$,  the eigenvalues of the linearization at this point have nonzero imaginary parts, which proves that  in this case the heteroclinic will be oscillatory in a neighbourhood of $[P^+,0]$.

\emph{Case (ii).}
To prove the result in the case  $s<0$ and $P^-<P^+$, we shall  reverse the parameter  describing the profile, namely, we shall consider $\tilde{y}=-y$ and the dependent variables $\tilde{P}$ and $\tilde{Q}$ accordingly. 
Denoting $'=d/d\tilde{y}$,  the dynamical system rewrites as follows:
\begin{align}
\tilde{P}'&=\tilde{Q},\label{first_order_system_reverse_1}\\
\tilde{Q}'&=\frac{f(\tilde{P})}{k^2}-\frac{2 \tilde{s} \mu}{k^2}\tilde{Q},\label{first_order_system_reverse_2}
\end{align}
where $\tilde{s}=-s$, $\tilde{s}>0$.
Hence, we can apply Case (i) to  \eqref{first_order_system_reverse_1}-\eqref{first_order_system_reverse_2} 
to conclude there exists an heteroclinic orbit for that system connecting $[P^+,0]$ to $[P^-,0]$. The latter  corresponds to an heteroclinic orbit connecting $[P^-,0]$ to $[P^+,0]$ in the forward parameter $y$ for \eqref{first_order_system_1}-\eqref{first_order_system_2} for $s<0$ and $P^-<P^+$ and the proof is complete.
\end{proof}

We conclude the section by stating the existence result proved above in terms of the speed $s$ and the end states $[\rho^{\pm}$, $u^{\pm}]$ of the system \eqref{euler1}-\eqref{euler2}, linked by the Rankine-Hugoniot conditions \eqref{RHnon1}-\eqref{RHnon2}, and in particular in terms of   Lax entropy conditions, as well as super- or subsonic conditions. 
To this end, let us denote $W=[\rho,u]$. The eigenvalues of the Jacobian of the system \eqref{euler1}-\eqref{euler2} are
\begin{equation*}
\lambda_1(W) = u - c_s(\rho),\mbox{ }\lambda_2(W) = u + c_s(\rho),
\end{equation*}
where, noting that $h'(\rho)\geq 0$, we used the notation $c_s(\rho) = \sqrt{\rho h'(\rho)}\geq0$ for the sound speed. We recall that a discontinuity $(W^\pm,s)$ verifyng  the Rankine-Hugoniot conditions \eqref{RHnon1}-\eqref{RHnon2} is a Lax $k$--shock, $k=1,2$, if
\begin{equation*}
\lambda_k(W^+)<s<\lambda_k(W^-).
\end{equation*}
Moreover, the state $W^\pm = [\rho^\pm,u^\pm]$ is referred to as supersonic (resp.\ subsonic) if $|u^\pm| > c_s(\rho^\pm)$ (resp.\ $|u^\pm| < c_s(\rho^\pm)$).
\begin{corollary}
\label{global_existence_entropy}
Suppose the end states $[\rho^{\pm}$, $u^{\pm}]$ and the speed $s$ satisfy $\rho^{\pm}>0$ and $[\rho^{\pm}$, $u^{\pm}; s]$ defines
\begin{enumerate}
\item[\textit{(i)}]
 a Lax 2--shock with a subsonic right state;
\item[\textit{(ii)}]
  a Lax 1--shock with a subsonic left state. 
\end{enumerate}
Then there exists a traveling wave profile connecting 
$[\rho^{-}$, $u^{-}]$ to $[\rho^{+}$, $u^{+}]$.
\end{corollary}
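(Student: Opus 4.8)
The plan is to reduce Corollary \ref{global_existence_entropy} to Lemma \ref{global_existence_nonlinear_dissipation} by converting the Lax/sonic hypotheses into the sign of $s$ and the ordering of $P^+$ and $P^-$. The bridge is a single algebraic identity: I claim that at the end states
\[
f'(P^\pm) = 2\bigl(c_s(\rho^\pm)^2 - (s-u^\pm)^2\bigr).
\]
To prove it I would start from the explicit expression \eqref{eq:fprime} for $f'$ and eliminate the constant $B$ using $f(P^\pm)=0$ — which is precisely the vanishing, at $P=P^\pm$, of the bracket appearing in the definition of $f$ (equivalently of $g(\rho):=\tfrac12(A^2/\rho^2-s^2)+h(\rho)+B$ at $\rho=\rho^\pm$). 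After this cancellation one is left, for $\gamma>1$, with $f'(P^\pm)=-2A^2/(\rho^\pm)^2+2\gamma(\rho^\pm)^{\gamma-1}$, and with the analogous expression $f'(P^\pm)=-2A^2/(\rho^\pm)^2+2$ for $\gamma=1$. Then I use \eqref{eq:Aadd} in the form $A=(s-u^\pm)\rho^\pm$, so that $A^2/(\rho^\pm)^2=(s-u^\pm)^2$, together with $\gamma(\rho^\pm)^{\gamma-1}=\rho^\pm h'(\rho^\pm)=c_s(\rho^\pm)^2$ (and $\rho h'(\rho)\equiv1=c_s^2$ for $\gamma=1$), which gives the claimed identity. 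In particular $f'(P^\pm)>0$ exactly when $\lambda_1(W^\pm)<s<\lambda_2(W^\pm)$.

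Next I treat case \textit{(i)}: assume $[\rho^\pm,u^\pm;s]$ is a Lax $2$--shock with subsonic right state. From $\lambda_2(W^+)<s$, i.e.\ $u^++c_s(\rho^+)<s$, and $|u^+|<c_s(\rho^+)$, the quantity $u^++c_s(\rho^+)$ is strictly positive, hence $s>0$; moreover $s-u^+>c_s(\rho^+)>0$, so $(s-u^+)^2>c_s(\rho^+)^2$ and therefore $f'(P^+)<0$ by the identity. Since $f$ is strictly convex on $(0,\infty)$ (recall $f''>0$) with exactly the two positive zeros $P^+\neq P^-$ — distinct because a Lax shock forces $W^+\neq W^-$, and then $\rho^+\neq\rho^-$ by $A=(s-u^+)\rho^+=(s-u^-)\rho^-$ — the function $f$ is negative strictly between them, so $f'$ is negative at the smaller zero and positive at the larger one; $f'(P^+)<0$ forces $P^+$ to be the smaller, i.e.\ $0<P^+<P^-$. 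Thus the hypotheses of Lemma \ref{global_existence_nonlinear_dissipation}\textit{(i)} are met, and the heteroclinic it provides, combined with $\rho=P^2$ and $U=s-A/P^2$ from \eqref{eq_v} (which restores $u^\pm=s-A/\rho^\pm$ as $y\to\pm\infty$), is a traveling wave connecting $[\rho^-,u^-]$ to $[\rho^+,u^+]$. Case \textit{(ii)} is symmetric: for a Lax $1$--shock with subsonic left state, $s<\lambda_1(W^-)=u^--c_s(\rho^-)<0$ gives both $s<0$ and $(s-u^-)^2>c_s(\rho^-)^2$, hence $f'(P^-)<0$; convexity then yields $0<P^-<P^+$ and Lemma \ref{global_existence_nonlinear_dissipation}\textit{(ii)} applies.

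The only genuine content is the identity $f'(P^\pm)=2\bigl(c_s(\rho^\pm)^2-(s-u^\pm)^2\bigr)$; once it is available, everything else is the convexity bookkeeping above. It is worth noting that the argument uses only one of the two Lax inequalities at each state (the one at the right state for a $2$--shock, at the left state for a $1$--shock) together with the single subsonicity assumption — the remaining Lax inequality plays no role. The one point deserving care is the sign-matching in the $\gamma=1$ branch of \eqref{eq:fprime}, where the $\ln(P^2)$ term and the additive constant must cancel exactly against the expression obtained for $B$ from $f(P^\pm)=0$, but this is a routine check.
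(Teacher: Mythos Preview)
Your proof is correct and takes a genuinely different route from the paper's. The paper solves the Rankine--Hugoniot relations explicitly for $u^\pm$ in terms of $\rho^\pm$ (obtaining $u^\pm = s \pm d$ and the companion formula), uses the subsonic hypothesis to pick the correct root, and then combines \emph{both} Lax inequalities into $u^+-u^-<c_s(\rho^-)-c_s(\rho^+)$, concluding $\rho^+<\rho^-$ from the monotonicity of the sound speed. Your argument instead hinges on the clean identity $f'(P^\pm)=2\bigl(c_s(\rho^\pm)^2-(s-u^\pm)^2\bigr)$, which translates the single Lax inequality at the appropriate end state directly into the sign of $f'$ there, and then the strict convexity of $f$ (already established in the paper) forces the ordering of the roots. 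Your approach is shorter, avoids the explicit quadratic and the case analysis on $\gamma$, and --- as you correctly observe --- uses only half of the Lax condition together with the subsonic assumption; the paper's proof genuinely consumes both halves. The trade-off is that the paper's explicit formulas for $u^\pm_{1,2}$ are reused later in Section~\ref{sec:sens} to discuss admissibility of the two velocity branches, so some of that computation is not wasted in context; your identity, on the other hand, makes the link between the Lax/characteristic structure and the saddle/sink nature of the equilibria in \eqref{first_order_system_1}--\eqref{first_order_system_2} completely transparent.
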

\begin{proof}
First of all, we are going to express $u^{\pm}$ in terms of $\rho^{\pm}$ from the Rankine-Hugoniot conditions \eqref{RHnon1}-\eqref{RHnon2}. From equation \eqref{RHnon1} we get
\begin{equation}
u^+ = s + \frac{\rho^-}{\rho^+}(u^--s) \label{eq_up}.
\end{equation}
Substituting $u^+$ in equation \eqref{RHnon2} and dividing by 
\begin{equation*}
 \frac{1}{2}\Big{(}1-\Big{(}\frac{\rho^-}{\rho^+}\Big{)}^2\Big{)} \neq 0,
 \end{equation*}
 we obtain the quadratic equation
 \begin{equation}
 (u^-)^2 - 2s u^{-} + s^2 + 2 (\rho^+)^2\frac{h(\rho^-)-h(\rho^+)}{(\rho^+)^2-(\rho^-)^2}=0.
 \label{eq_quad}
 \end{equation}
 Since $h(\rho)$ is strictly increasing in $\rho>0$,  the discriminant  of this equation 
 \begin{equation*}
  8 (\rho^+)^2 \frac{h(\rho^+)-h(\rho^-)}{(\rho^+)^2-(\rho^-)^2} = 
  8 \frac{(\rho^+)^2}{\rho^+ + \rho^-} \frac{h(\rho^+)-h(\rho^-)}{(\rho^+) - (\rho^-)}
 \end{equation*}
 is strictly positive.
 Hence, the two real  roots of \eqref{eq_quad} are given by $u^{-}_{1,2}=s \pm d$,
 where 
 \begin{equation*}
 d =  \rho^{+} \sqrt{2\frac{h(\rho^+)-h(\rho^-)}{(\rho^+)^2-(\rho^-)^2}}>0.
 \end{equation*}
 Substituting these roots in equation \eqref{eq_up} yields the two solutions
 \begin{equation*}
  u^+_{1,2} = s \pm \frac{\rho^-}{\rho^+} d.
 \end{equation*}
 
 \emph{Case (i).}
 Since   the shock satisfies the Lax condition
 \begin{equation*}
 \lambda_2(W^+)<s<\lambda_2(W^-)
 \end{equation*}
 and $W^+$ is subsonic, in particular we have $s>u^+ +c_s(\rho^+)>0$. 
 Moreover, since $c_s(\rho^+)>0$, we conclude   
 $$u^+=s - \frac{\rho^-}{\rho^+} d$$
 and, accordingly, $u^- =  s-d$.
 Using again   the Lax condition we get 
 $$u^+ - u^- <c_s(\rho^-) - c_s(\rho^+),$$
  that is  
 \begin{equation*}
  ( {\rho^+ - \rho^-})\frac{d}{\rho^+} < c_s(\rho^-) - c_s(\rho^+).
 \end{equation*}
 Since the speed of sound $c_s(\rho)$ is non decreasing, from the above inequality we conclude $\rho^+<\rho^-$ and we are in Case (i)  of Lemma \ref{global_existence_nonlinear_dissipation} for the existence of a profile.

 \emph{Case (ii).}
In this case, the shock satisfies the Lax condition
 \begin{equation*}
 \lambda_1(W^+)<s<\lambda_1(W^-)
 \end{equation*}
and, being   $W^-$   subsonic, we conclude $s<u^- - c_s(\rho^-)<0$. 
Moreover, since $c_s(\rho^-)>0$, we have $u^{-}=s+d$ and 
 $$u^+=s + \frac{\rho^-}{\rho^+} d.$$ 
In addition, from the Lax condition 
we infer
 $$u^+-u^-<c_s(\rho^+)-c_s(\rho^-),$$ 
 which implies 
 \begin{equation*}
 (\rho^- - \rho^+)\frac{d}{\rho^+}<c_s(\rho^+)-c_s(\rho^-).
 \end{equation*}
 As before, this inequality implies $\rho^+>\rho^-$ because the  sound speed $c_s(\rho)$ is non-decreasing.
Finally, we are in Case (ii)  of Lemma \ref{global_existence_nonlinear_dissipation} and we can conclude with  the existence of a profile.
\end{proof}
\begin{remark}\label{rem:sonic}
It is worth observing that the conditions stated in Corollary \ref{global_existence_entropy} are only possible \emph{sufficient} conditions which guarantee the validity of the hypotheses of Lemma \ref{global_existence_nonlinear_dissipation}, while other possible regimes for the end states may be considered as well. More precisely, in both cases the subsonic assumptions on the end states are needed solely to determine the sign of the speed of the traveling wave $s$, which can be clearly obtained in many other cases. For instance, one could replace Case (i) with the case of a Lax 2--shock with a right state with positive velocity  (to have 
$s>u^+ +c_s(\rho^+)>0$), or  replace Case (ii) with the case of a Lax 1--shock with a left state with negative velocity  (to have 
$s<u^- -c_s(\rho^-)<0$). However, we decided to present the results as in Corollary \ref{global_existence_entropy} to have a concise statement which refers to well-known physical properties of the end states $[\rho^\pm, u^\pm]$. 
\end{remark}
\section{numerical sensitivity for oscillations in terms of nearness to vacuum and ratio viscosity/dispersion}\label{sec:sens}
The aim of this section is to give numerically some insights on the behavior of the profiles whose existence has been proved  in Section \ref{sec:existence}. As it is manifest (see in particular Corollary \ref{global_existence_entropy}), besides Lax conditions and subsonic conditions for the end sates, in the nonlinear viscosity case under investigation here, there are no other requirements to  prove  this results, in contrast to what happens in the linear case \cite{Zhelyazov}. Indeed, in this paper, the analytical proof of existence of profiles for large amplitude  shocks and linear viscosity requires that the ratio $\mu/k$ is sufficiently big, in addition to  conditions on end states similar to the ones presented here. Moreover, conditions analogous to those of Lemma \ref{global_existence_nonlinear_dissipation} guarantee the profile is non monotone also for the linear viscosity case. This behavior is validated numerically in  \cite{LMZ2020}, where in addition it is shown numerically that the existence of profiles appears to be valid beyond the sufficient conditions needed for the analytical proof.

Consistently, in the nonlinear model under examination in this paper, the role of the ratio $\mu/k$, in connection to the end states $P^{\pm}$,  is confined only in determining the monotonicity property of the profiles,  while their existence  is independent on it: as is expected, the presence of larger dispersion  increases the oscillations of the profiles, and we shall give numerical evidence of this behavior in the subsequent sections.
For this, we will use the variable $R(y)=P(y)^2$, denoting the density profile, while  $U(y)$ is deduced  from equation \eqref{eq_v}. Moreover, once we give end states $\rho^{\pm} = {(P^2)}^{\pm}$, the corresponding values for the velocity $u^\pm$ are given by
%
%
%
\begin{align}
  u^+_{1,2} &= s \pm \frac{\rho^-}{\rho^+} d, \label{up12} \\
 u^{-}_{1,2} &=s \pm d, \label{um12}
 \end{align}
 where
 \begin{equation*}
 d =  \rho^{+} \sqrt{2\frac{h(\rho^+)-h(\rho^-)}{(\rho^+)^2-(\rho^-)^2}}>0.
 \end{equation*}
 For the sake of completeness and clarity, 
let us now analyze the aforementioned two possibilities for the velocities $u^\pm$ in terms of the Lax conditions, starting for the case $0<\rho^+ < \rho^-$. More precisely, in that case, we shall prove that
the velocity $u^{\pm}_1$ should not be considered, because the corresponding shock $(\rho^{\pm},u^{\pm}_1,s)$ will not be an admissible Lax shock for \eqref{euler1}-\eqref{euler2}, while $u^{\pm}_2$ will define a Lax 2--shock for that system. 

To this end, let us suppose $0<\rho^+<\rho^-$ and assume $u^{\pm}_1$ defines a Lax 1--shock, that is
\begin{equation*}
u^{+}_1 - c_s(\rho^+) < s < u^{-}_1 - c_s(\rho^-).
\end{equation*}
This implies
\begin{equation*}
\frac{\rho^-}{\rho^+}d - c_s(\rho^+) < 0 < d - c_s(\rho^-),
\end{equation*}
which is impossible because   
$$\frac{\rho^-}{\rho^+}d>d$$
 and, since $c_s(\rho)$ is non-decreasing,  $-c_s(\rho^+) \geq -c_s(\rho^-)$. 
If $u^{\pm}_1$   satisfies the condition for Lax 2--shock, then  in particular $u^{+}_1 + c_s(\rho^+) < s$. Using  again the expression \eqref{up12} for $u^+_1$ we end up with 
\begin{equation*}
\frac{\rho^-}{\rho^+} d + c_s(\rho^+) < 0,
\end{equation*}
which is impossible because  $\rho^\pm$, $d$, and  $c_s(\rho^+)$ are strictly positive. Hence, the solution $u^{\pm}_1$ can not be considered because it defines a discontinuity which is not admissible.

Now, let us move to check  the conditions verified by $u^{\pm}_2$, starting by proving it does not satisfy those of Lax 1--shock. 
Indeed, if we assume by contradiction these conditions are verified, then in particular we have $s < u^{-}_2 - c_s(\rho^-)$. 
Using the expression \eqref{um12} for $u^-_2$, this is equivalent to
\begin{equation*}
0 > d + c_s(\rho^-),
\end{equation*}
which is again impossible because $d$, $c_s(\rho^-)>0$. Finally, let us check $u^{\pm}_2$ verifies  the conditions for a Lax 2--shock, namely
\begin{equation}\label{eq:sec4Lax}
u^{+}_2 + c_s(\rho^+) < s < u^{-}_2 + c_s(\rho^-).
\end{equation}
In the following we will use the ratio $r = \frac{\rho^+}{\rho^-}$, which verifies $0 < r < 1$, being  $0<\rho^+ < \rho^-$, and we shall consider the two cases $\gamma = 1$ and $\gamma >1$ separately.

Let us  consider the case $\gamma = 1$. From \eqref{up12} for $u^+_2$,  the relation  $u^{+}_2 + c_s(\rho^+) <~s$
is equivalent to
\begin{equation*}
1 < \rho^{-} \sqrt{2\frac{\ln(\rho^+)-\ln(\rho^-)}{(\rho^+)^2-(\rho^-)^2}}.
\end{equation*}
Squaring and multiplying by $r^2 - 1<0$, we get the equivalent inequality
\begin{equation}
r^2 - 1 > 2 \ln r, \mbox{ for } 0 < r=  \frac{\rho^+}{\rho^-} < 1. \label{Lax_2s_1}
\end{equation}
With the notations
$
f(r) = r^2 - 1$, $g(r) = 2 \ln r$,
we have $f(1) =g(1) = 0$ and
\begin{equation*}
f(r) - g(r) = \int_r^1 (g'(z) - f'(z)) dz =  \int_r^1 2 \left (\frac{1}{z} - z \right ) dz >0,
\end{equation*}
which proves \eqref{Lax_2s_1}.
The second inequality in \eqref{eq:sec4Lax} is proved similarly. Indeed, 
using the expression \eqref{um12} for $u^-_2$, $ s < u^{-}_2 + c_s(\rho^-)$ rewrites
\begin{equation*}
\rho^{+} \sqrt{2\frac{\ln(\rho^+)-\ln(\rho^-)}{(\rho^+)^2-(\rho^-)^2}} < 1.
\end{equation*}
Squaring and multiplying by $1 - r^{-2}<0$ we end up to
\begin{equation}
2 \ln r > 1 - \frac{1}{r^2}, \mbox{ for } 0 < r =  \frac{\rho^+}{\rho^-} < 1. \label{Lax_2s_2}
\end{equation}
Considering this time
\begin{equation*}
g(r) = 2 \ln r ,\mbox{ } \tilde f(r) = 1 - \frac{1}{r^2},
\end{equation*}
one has $\tilde f(1) = g(1) = 0$ and
\begin{equation*}
g(r)- \tilde f(r)  = \int_r^1(\tilde{f}'(z) - g'(z))dz =  \int_r^1 2  \left ( \frac{1}{z^3} - \frac{1}{z}  \right ) dz >0.
\end{equation*}
Hence \eqref{Lax_2s_2} is satisfied   and we have checked the Lax condition \eqref{eq:sec4Lax} for $\gamma=1$.

Now, let us consider the case $\gamma > 1$. 
Substituting the value $u^+_2$ given in \eqref{up12} in the inequality $u^{+}_2 + c_s(\rho^+) < s$, one has
\begin{equation*}
\sqrt{\gamma(\rho^+)^{\gamma-1}}<  \rho^{-} \sqrt{\frac{2\gamma}{\gamma-1}\frac{(\rho^+)^{\gamma-1}-(\rho^-)^{\gamma-1}}{(\rho^+)^2-(\rho^-)^2}}.
\end{equation*}

Squaring and multiplying by
\begin{equation*}
\frac{1}{\gamma (\rho^+)^{\gamma - 1}} \Big{(} \Big{(} \frac{\rho^+}{\rho^-} \Big{)}^2 - 1 \Big{)} < 0, 
\end{equation*}
we obtain 
\begin{equation}
r^2 - 1 > \frac{2}{\gamma - 1}(1 - r^{1-\gamma}), \mbox{ for } 0 < r =  \frac{\rho^+}{\rho^-}< 1 \label{Lax_2s_3}.
\end{equation}
For
\begin{equation*}
f(r) = r^2 - 1 \mbox{ and } \tilde g(r) = \frac{2}{\gamma - 1}(1 - r^{1 - \gamma}),
\end{equation*}
we have $f(1) = \tilde g(1) = 0$. Moreover
\begin{equation*}
f(r) - \tilde g(r) = \int_r^1( \tilde{g}'(z) - f'(z))dz > 0,
\end{equation*}
because $2/ r^\gamma > 2 r$, for $0 < r < 1$ and $\gamma>1$, and   \eqref{Lax_2s_3} is verified. 
%
Now, using   the expression \eqref{um12} for $u^-_2$, we rewrite the second inequality in \eqref{eq:sec4Lax} as follows:
\begin{equation*}
\rho^{+} \sqrt{\frac{2\gamma}{\gamma-1}\frac{(\rho^+)^{\gamma-1}-(\rho^-)^{\gamma-1}}{(\rho^+)^2-(\rho^-)^2}} < 
\sqrt{\gamma(\rho^-)^{\gamma-1}}.
\end{equation*}
Squaring and multiplying by
\begin{equation*}
\frac{1}{\gamma (\rho^-)^{\gamma - 1}}\Big{(} 1 - \Big{(} \frac{\rho^-}{\rho^+}\Big{)}^2\Big{)} < 0,
\end{equation*}
the above inequality    is equivalent to
\begin{equation}
\frac{2}{\gamma - 1}(r^{\gamma-1} - 1) > 1 - \frac{1}{r^2}, \mbox{ for } 0 < r = \frac{\rho^+}{\rho^-} < 1. \label{Lax_2s_4}
\end{equation}
Let us now consider
\begin{equation*}
\bar g (r) = \frac{2}{\gamma - 1}(r^{\gamma - 1} - 1),\mbox{ }\tilde f(r) = 1 - \frac{1}{r^2}.
\end{equation*}
Then we have $\bar g (1) = \tilde f(1) = 0$ and 
\begin{equation*}
\bar g(r) - \tilde f(r) = \int_{r}^1(\tilde{f}'(z) - \bar{g}'(z))dz > 0, 
\end{equation*}
being  $2/r^3 > 2/r^{2-\gamma}$ for $0 < r < 1$ and $\gamma>1$. Hence,   \eqref{Lax_2s_4} is satisfied and  the Lax condition \eqref{eq:sec4Lax}  is verified  also for $\gamma>1$.

In the case $0<\rho^- < \rho^+$, similar arguments show that the velocity $u^{\pm}_2$ should not be considered, because the resulting shock $(\rho^{\pm},u^{\pm}_2,s)$ would not be admissible for \eqref{euler1}-\eqref{euler2}, while $u^{\pm}_1$ leads to an admissible Lax 1--shock; we leave the details to the reader.

\subsection{Sensitivity for ratio of viscosity to dispersion coefficients}\label{subsec:sensvisc}
We start by analyzing numerically how the  profile is affected by the ratio $\mu/k$. For this,  we consider the model \eqref{eq_sys_n_intro} with adiabatic exponent  $\gamma=5/3$ and dispersion coefficient $k=\sqrt{2}$, leaving $\mu$ and hence  $\mu/k$ variable. In particular, we consider four values for this ratio, that is, in decreasing order: 
 2.83, 0.71, 0.35, and  0.18.
Moreover, the end states for the density are  $\rho^+=1.0 < \rho^-=1.5$, the speed of the profile is $s=1$, and 
 the corresponding values for the velocity defining an admissible Lax 2--shock, as said in the discussions above, are $u^+=-0.67 $ and  $u^-=-0.11$.
In particular, the sufficient conditions for existence of the profile stated in Lemma \ref{global_existence_nonlinear_dissipation}, case (i), are verified, as well a
  conditions (i) of Corollary \ref{global_existence_entropy}, being in addition
\begin{equation*}
|u^+| = 0.67 < 1.29 = c_s(\rho^+).
\end{equation*}

The behavior of the resulting profile in the four cases considered above is depicted in  
 Figure \ref{fig_sensitivity_viscosity}.
 The first profile is monotone, in agreement to the analytical result of Lemma \ref{global_existence_nonlinear_dissipation}, case (i), being  
\begin{equation*}
\frac{s\mu}{k} = 2.83 > 1.50 =  \sqrt{-f'(P^+)},
\end{equation*}
and the Jacobian $J$ of the dynamical system \eqref{first_order_system_1}-\eqref{first_order_system_2} at $[P^+,0]$ has real eigenvalues.
Then, since 
\begin{equation*}
0.18< 0.35 < 0.71< 1.50 =  \sqrt{-f'(P^+)},
\end{equation*}
 in all remaining cases the condition is verified, and the profile turns out to be  oscillatory, as predicted by the lemma. Moreover, the numerics shows also that the oscillations increases as the ratio $\mu/k$ decreases.
 In these last three cases, the Jacobian $J$  at $[P^+,0]$ has imaginary eigenvalues, and their 
  imaginary parts  increase in modulus as the ratio $\mu/k$ decreases.
\begin{figure}[H]
\centering
\begin{subfigure}[b]{0.45\textwidth}
\includegraphics[scale=0.6]{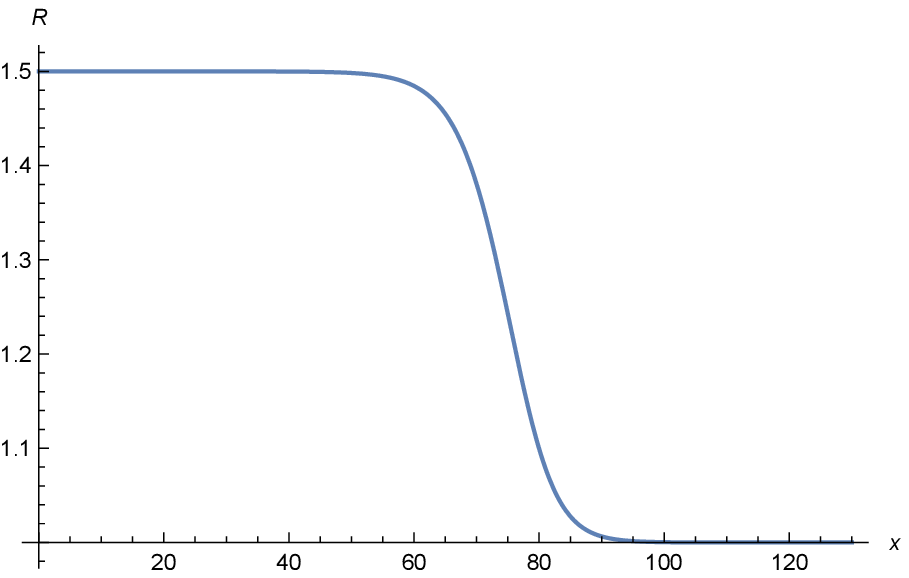}
\caption{}
\end{subfigure}%
\begin{subfigure}[b]{0.45\textwidth}
\includegraphics[scale=0.6]{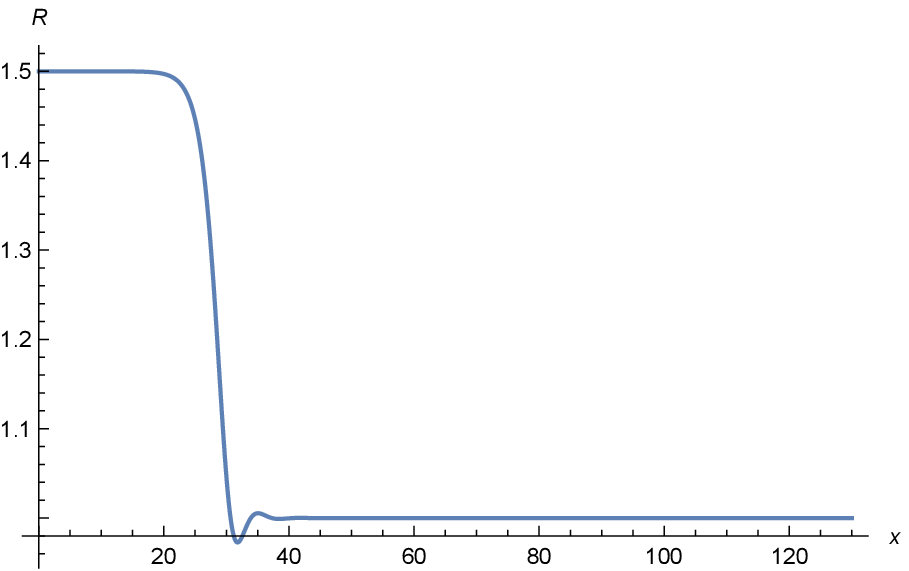}
\caption{}
\end{subfigure}

\centering
\begin{subfigure}[b]{0.45\textwidth}
\includegraphics[scale=0.6]{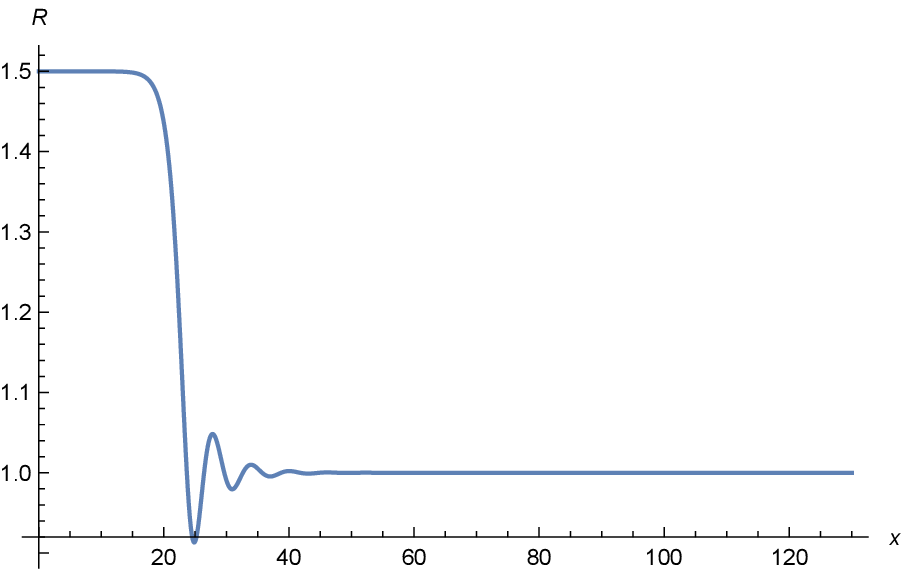}
\caption{}
\end{subfigure}%
\begin{subfigure}[b]{0.45\textwidth}
\includegraphics[scale=0.6]{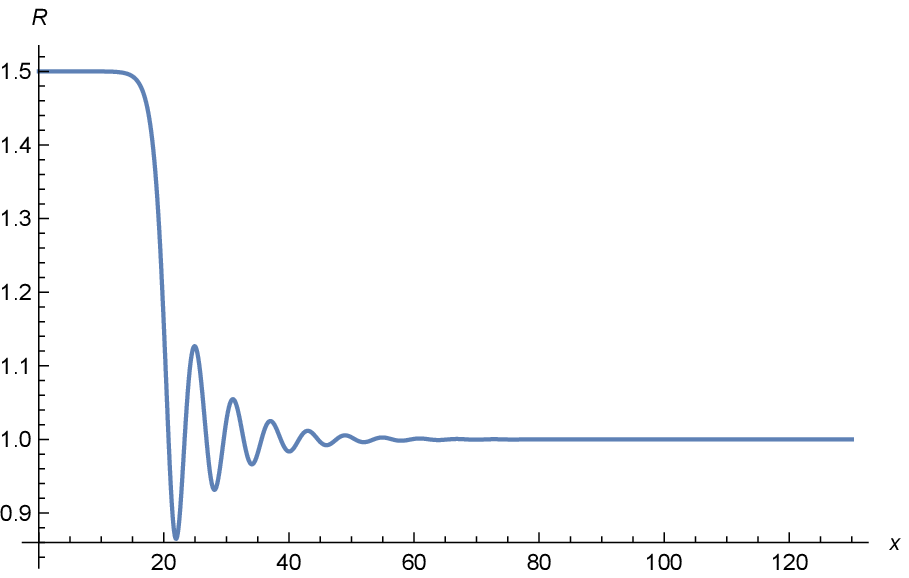}
\caption{}
\end{subfigure}
\caption{Profiles for different values of the viscosity coefficient $\mu$. The parameters are $s=1$, $\gamma=5/3$, $k=\sqrt{2}$, $\rho^+=1.0$, $\rho^-=1.5$, $u^+=-0.67$, $u^-=-0.11$.  (A) $\mu/k=2.83$. (B) $\mu/k=0.71$. (C) $\mu/k=0.35$. (D) $\mu/k=0.18$.}
\label{fig_sensitivity_viscosity}
\end{figure}
\subsection{Sensitivity analysis about vacuum}\label{subsec:vacuum}
Our second numerical experiments aim to study  the behavior of the profiles as the end state $\rho^+$ varies and all other parameters are fixed. For this, we consider the system \eqref{eq_sys_n_intro} with $\gamma = 3/2$, $\mu=1.2$ and $k=\sqrt{2}$. The speed of the profile $s$ is kept fixed equal to 1, as well as the left state $\rho^-=0.5$, while we consider the following  four values for the right state $\rho_{a,b,c,d}^+$: 0.4, 0.3, 0.1, 0.05. 
The corresponding values for the velocity defining  an admissible  Lax 2--shock are given by
\begin{align*}
& u^-_a = 0.11, & & u^+_a = -0.12,\\
& u^-_b = 0.27, & & u^+_b = -0.22,\\
& u^-_c = 0.69, & & u^+_c = -0.56,\\
& u^-_d = 0.83, & & u^+_d = -0.71.\\
\end{align*}
Hence, Lemma \ref{global_existence_nonlinear_dissipation}, case (i),  applies to these sets of parameters, and, in addition,  as said before, the resulting shocks 
$((\rho^-,u^-_{a,b,c,d}), (\rho^+_{a,b,c,d},u^+_{a,b,c,d}),s)$ are   admissible Lax 2--shocks. Since  the values of $|u^+_{a,b,c,d}|$  are 0.12, 0.22, 0.56, 0.71, and those of $c_s(\rho^+_{a,b,c,d})$ are 0.97, 0.91, 0.69, 0.58,  the first three parameter sets satisfy also the condition for a subsonic right state   of Corollary \ref{global_existence_entropy}, case (i):
\begin{equation*}
0.12 < 0.97,\mbox{ }0.22 < 0.91,\mbox{ }0.56 < 0.69.
\end{equation*}
Conversely, for $\rho_d^+ = 0.05$, the condition is violated because $0.71 > 0.58$, and  Corollary \ref{global_existence_entropy} does not apply in this case.

 The profiles are shown in Figure \ref{figure_profiles_vacuum}. The values of $\sqrt{-f'(P_{a,b,c,d}^+)}$, corresponding to $\rho_{a,b,c,d}^+$, are 0.77, 1.16, 1.98, 2.28. The profile with $\rho_a^+=0.4$ is monotone, in accordance with Lemma \ref{global_existence_nonlinear_dissipation}, case (i), because 
\begin{equation*}
\frac{s\mu}{k} = 0.85 > 0.77 =  \sqrt{-f'(P_a^+)},
\end{equation*}
and the Jacobian $J$ at $[P_a^+,0]$ has real eigenvalues. Moreover,
\begin{equation*}
\frac{s\mu}{k}= 0.85 < 1.16 < 1.98 < 2.28,
\end{equation*}
so in all other cases the condition of  Lemma \ref{global_existence_nonlinear_dissipation}, case (i), is verified and the profiles are non-monotone.
As we have pointed out also in the section before, we see that the profiles become more oscillatory as the end state $\rho^+$ becomes closer to vacuum; however, the oscillations have smaller amplitude compared to the case where we vary the viscosity. Finally, for the last three parameter sets, namely at 
$[P_{b,c,d}^+,0]$,  the Jacobian $J$   has imaginary eigenvalues, and their imaginary parts  increase in absolute value as the end state $\rho^+$ approaches vacuum.
\begin{figure}[H]
\centering
\includegraphics[scale=0.6]{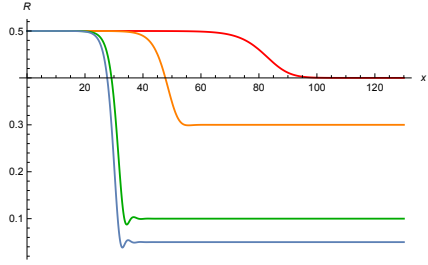}

\caption{Profiles for varying end state $\rho^+$. The parameters are $s=1$, $\gamma=3/2$, $k=\sqrt{2}$, $\rho^-=0.5$, $\mu=1.2$. red: $\rho_a^+=0.4$, $u^-_a=0.11$, $u^+_a=-0.12$;  orange: $\rho_b^+=0.3$,  $u^-_b=0.27$, $u^+_b=-0.22$;  green: $\rho_c^+=0.1$, $u^-_c=0.69$, $u^+_c=-0.56$;  blue: $\rho_d^+=0.05$, $u^-_d=0.83$, $u^+_d=-0.71$.}
\label{figure_profiles_vacuum}
\end{figure}

\end{document}